\def\RR{{\mathbb{R}}}
\def\f12{{\scriptstyle{\frac{1}{2}}}}
\def\CC{{\mathbb{C}}}
\def\NN{{\mathbb{N}}}
\def\O{{\cal O}}
\def\S{{\cal S}}
\def\V{{\cal V}}
\def\X{{\cal X}}
\def\subneq{\mathop{\raise 0.4ex \hbox{~$\subset$}}
{\raise -0.4ex \hbox{$\!\!\!\!\!\scriptstyle{-}$}}
{\raise -0.2ex \hbox{$\!\!\!\scriptscriptstyle{/}$}}\ \,}
\def\eop{\hbox{{\vrule height7pt width3pt depth0pt}}}
\newcommand{\LL}{\mathop{\longrightarrow}}
\newcommand{\dm}[1]{{\displaystyle{#1}}}
\newcommand{\integ}[4]{\dm{\int_{#1}^{#2}\!{#3}\,d{#4}}}
\newcommand\bfor{\begin{eqnarray*}} 
\newcommand\efor{\end{eqnarray*}} 
\newcommand\bfornum{\begin{eqnarray}} 
\newcommand\efornum{\end{eqnarray}} 
\newcommand{\ba}{\begin{array}}
\newcommand{\ea}{\end{array}}
\newtheorem{thm}{Theorem}
\newtheorem{prop}{Proposition}
\newtheorem{lem}{Lemma}
\newenvironment{proof}
{\begin{sloppypar}\noindent{\sf Proof~: }}
{\hspace*{\fill}\eop
\medskip
\end{sloppypar}}
\title{An improvement of the   product integration method for a weakly singular Hammerstein equation }
\author{Laurence Grammont\thanks{Universit\'e de Lyon, Institut Camille Jordan, UMR 5208, 23 rue du Dr Paul Michelon, 
42023 Saint-\'Etienne Cedex 2, France. laurence.grammont@univ-st-etienne.fr}, Hanane Kaboul \thanks{Universit\'e de Lyon, Institut Camille Jordan, UMR 5208, 23 rue du Dr Paul Michelon, 
42023 Saint-\'Etienne Cedex 2, France. hanane.kaboul@univ-st-etienne.fr }}
\date{} 
\begin{document}

\maketitle 

\abstract{We present a new method to solve nonlinear Hammerstein equations with weakly singular kernels. The  process to approximate the solution,                                                 followed usually,  consists in adapting the discretization scheme from the linear case in order to obtain a nonlinear system in a finite dimensional space and solve it by any linearization method. In this paper,                                                                 we propose to first linearize, via                                                                                                                                                                                                                                                                                        Newton method, the nonlinear operator equation and only then to discretize  the obtained linear equations by the product integration method. We prove that  the iterates, issued from our method, tends to the exact solution of the nonlinear Hammerstein equation when the number of Newton  iterations tends to infinity, whatever the discretization parameter can be. This is not the case when the discretization is done first: in this case, the                            accuracy of the approximation is limited by the mesh                            size discretization. A Numerical example is given to confirm the theorical result.}

\bigskip

\noindent
{\bf Keywords:} Nonlinear equations, Newton-like methods, Product integration method, Integral equations.

\bigskip

\noindent
{\bf AMS Classification:} 65J15, 45G10, 35P05 

\section{Introduction}
The general framework of this paper is the following. 
Let $\X$ be a complex Banach space  and $K:\O\subseteq\X\to\X$   a nonlinear Fr\'echet differentiable integral operator  of the Hammerstein type  defined on a 
nonempty open set $\O$ of $\X$:
\begin{eqnarray}\label{defK}
K(x)(s):=\integ{a}{b}{ H(s,t)L(s,t){F(t,x(t))}}{t}, \   \mbox{ for all } x\in \Omega,
\end{eqnarray} 
where $H$ is the singular part of the kernel, $L$ is the regular part of the kernel and $F$, the nonlinear part of the operator, is a real-valued function of  two variables~: 
$$
(t,u)\in [a,b]{\times}\RR\mapsto F(t,u)\in\RR,
$$
with enough regularity so that $K$ is twice Fr\'echet-differentiable on $\O$. \\
The problem is  
  a nonlinear Fredholm integral equation of the second kind:
\begin{eqnarray}\label{problem}
\mbox{Find }\varphi\in\O~:\quad\varphi-K(\varphi)=y,
\end{eqnarray}
for a given function $y\in\X$.  \\   
Let $T:=K'$ denote the Fr\'echet derivative of $K$, i.e., for all $x\in\O$, 
\begin{eqnarray}\label{defT}
T(x)h(s)=\integ{a}{b}{H(s,t)L(s,t)\dfrac{\partial F}{\partial u}(t,x(t))h(t)}{t},\quad h\in\X,s\in[a,b].
\end{eqnarray}
In the following, $\X$ will be the space of the real valued continuous functions over a real interval $[a,b]$, $C^0([a,b],\RR)$, equiped with the supremum norm $\Vert.\Vert$. 

If we consider a singular kernel  such as  $H(s,t):= \log(\vert s-t\vert)$ or $\vert s-t\vert^{\alpha}$, $0<\alpha<1$, an approximation based on standard numerical integrations is a poor idea. The main idea of the product integration method is introduced by Atkinson for linear integral equations (\cite{atkin0},\cite{atkin4} and \cite{atkin}) and is motivated by Young \cite{young}. The product integration method consists in performing a piecewise polynomial linear interpolation of the smooth part of the kernel times the function involving the  unknown. This method is called product trapezoidal rule when the interpolation is linear.  The solution of a second kind Fredholm integral equation with weakly singular kernel is typically nonsmooth near the boundary of the domain of integration. In order to obtain a high order of convergence, taking into account the singular behaviour of the exact solution, polynomial spline on graded mesh are developped (among other authors, Brunner, Pedas, Vainikko, Schneider \cite{brunner}, \cite{pedas} and \cite{schneider}). In \cite{kaneko}, Kaneko, Noren and Xu discuss a standard product integration method with a general piecewise polynomial interpolation for weakly singular Hammerstein equation and indicate its superconvergence properties. Under particular assumptions on the right hand side $y$, on the function $F$ defining the nonlinearity  and on the regularity of the exact solution of (\ref{problem}), they give an error estimation involving $n$ the discretization paramater and the degree  $m$  of the piecewise polynomial interpolation. Hence the error depends on $n$ and $m$. 

 In the 1950's, the major theme in the domain of theoretical numerical analysis was the developement of general frameworks in the domain of  functional analysis to build and analyze numerical methods. A particular important contribution in this context is the
paper of Kantorovich \cite{kantorovich1} and later \cite{kantorovich2}. It proposes a generalization of Newton's method for solving nonlinear operator equations on Banach spaces. This idea is used everywhere when dealing with integral equations or partial differential equations. In \cite{anselone1}, chapter 6, Anselone studies the Newton method to approximate solutions of nonlinear equations $P(x)=0$ where $P$ is a nonlinear differentiable operator from a Banach space into itself. When dealing with the convergence of approximate solutions,  these are defined as the solution of $P_n(x)=0$ where $P_n$ is an approximation of $P$. The Newton method is then applied to the functional equation $P_n(x)=0$. The philosophy of most of the papers dealing with the numerical approximation of nonlinear integral operator equation consists in defining an approximate operator $P_n$ to $P$ and then apply Newton method (\cite{ansorge},\cite{atkin1}, \cite{atkin2}, \cite{atkin3}, \cite{brunner},\cite{dell},\cite{gram3}, \cite{gram4},\cite{kaneko},\cite{kra},\cite{kra1},\cite{pedas} and \cite{vain}).  

We propose to apply Newton's method directly to the  operator equation $P(x)=0$ and then to discretize the linear operator equations, issued from the Newton's iterations, by a product integration method. We will prove that the approximate iterates tend to the exact solution of the operator equation as the number of iterations tends to infinity. The important fact is that the convergence holds  whatever  the discretization parameter,  defining the size of the linear system to be solved, can be. As we do not need  the solution of each Newton iteration to be particularly accurate, we chose to apply the classical trapezoidal product integration method.

Section 2 is devoted to the description of  our method (linearization via Newton's method  followed by discretization by the product integration method). In Section 3, the convergence result is proved. In the last section, the classical method (discretization followed by linearization) is recalled and we compare it with our method through     a numerical example.

\section{Description of the new method  } 
To solve the problem  (\ref{problem}) for a given function $y$ in $C^0([a,b],\RR)$, we propose to first apply  the Newton method to the equation  $\varphi-K(\varphi)=y$. It leads to the sequence $(\varphi^{(k)})_{k \geq 0} \in \O$: 
\begin{equation}\label{lin}
\varphi^{(0)} \in \O, \quad (I-T(\varphi^{(k)}))(\varphi^{(k+1)}-\varphi^{(k)})=-\varphi^{(k)}+K(\varphi^{(k)})+y, \ k\geq 0.
\end{equation}
Then we discretize this equation with the product integration method associated to the piecewise linear interpolation. \\
Let $\Delta_n$, defined by
\begin{equation}\label{deltan}
a=:t_{n,0}<t_{n,1}<\dots <t_{n,n}:=b,
\end{equation}
be the uniform grid of  $[a,b]$ with mesh  $h_n:=\dfrac{b-a}{n}$.\\

Setting 
$$f_x(t):=\dfrac{\partial F}{\partial u}(t,x(t )), $$
$[L(s,t)f_x(t)h(t)]_n$ denotes the piecewise linear interpolation of $L(s,t)f_x(t)h(t)$~:\\
 $\forall s\in [a,b],\forall i=1,\dots,n$:
 \begin{eqnarray*} 
 [L(s,t)f_x(t)h(t)]_n&:=&\frac{1}{h_n}  (t_{n,i}-t)L(s,t_{n,i-1})  f_x(t_{n,i-1} )h(t_{n,i-1}) \\
 && + \frac{1}{h_n}(t-t_{n,i-1})L(s,t_{n,i}) f_x(t_{n,i}))h(t_{n,i})  
\end{eqnarray*}
for  $t\in[t_{n,i-1},t_{n,i}]$.\\
 We define the approximate operator $T_n$ by
 \begin{eqnarray}\label{prodint} 
T_n(x)(h)(s):=\integ{a}{b}{H(s,t)[L(s,t) f_x(t)h(t)]_n}{t},\quad x\in\O, h\in\X,s\in[a,b].  
\end{eqnarray}

Then the approximate problem is:  
\begin{eqnarray}\label{intpro} 
\mbox{Find } \varphi^{(k+1)}_{n} \in X:(I-T_n(\varphi_n^{(k)}))(\varphi_n^{(k+1)}-  \varphi_n^{(k)})  =
  -\varphi_n^{(k)}+K(\varphi_n^{(k)})+y.
\end{eqnarray}
We have
\begin{eqnarray*}
T_n(x)(h)(s)
&:=&\sum_{j=0}^{n}w_{n,j}(s)L(s,t_{n,j})f_x(t_{n,j})h(t_{n,j}),\\
w_{n,0}(s)&:=& \frac{1}{h_n}\int_{t_{n,0}}^{t_{n,1}}H(s,t)(t_{n,1}-t)dt,  \\
w_{n,n}(s)&:=& \frac{1}{h_n}\int_{t_{n,n-1}}^{t_{n,n}}H(s,t)(t-t_{n,n-1})dt,\\
\mbox{and for} \ 1\leq j\leq n-1,\\
w_{n,j}(s)&:=&\frac{1}{h_n}\int_{t_{n,j-1}}^{t_{n,j}}H(s,t)(t-t_{n,j-1})dt+\frac{1}{h_n}\int_{t_{n,j}}^{t_{n,j+1}}H(s,t)(t_{n,j+1}-t)dt. 
\end{eqnarray*}
Hence (\ref{intpro}) can be rewritten as 
 
\begin{eqnarray}\label{nonlinapp}
& &\varphi_n^{(k+1)}(s)-
\sum_{j=0}^{n}w_{n,j}(s)L(s,t_{n,j})f_k(t_{n,j})\varphi_n^{(k+1)}(t_{n,j})\\
 &=& K(\varphi_n^{(k)})(s)+y(s)-\sum_{j=0}^{n}w_{n,j}(s)L(s,t_{n,j})f_k(t_{n,j})\varphi_n^{(k)}(t_{n,j}),
\end{eqnarray}
where 
$$f_k(t_{n,j}):=\dfrac{\partial F}{\partial u}(t_{n,j},\varphi_n^{(k)}(t_{n,j})).$$

Setting 
$$
x_n^{(k+1)}(j):=\varphi_n^{(k+1)}(t_{n,j}),
$$

From the evaluations of equation (\ref{nonlinapp}) at the nodes of the grid,  it is straightforward that the vector $x_n^{(k+1)}$ is the solution of the linear system
\begin{equation}
(I-A_n^{(k)})x_n^{(k+1)}=b_n^{(k)},
\end{equation}
where
\begin{eqnarray*} 
A_n^{(k)}(i,j)&:=& w_{n,j}(t_{n,i})L(t_{n,i},t_{n,j})f_k(t_{n,j}), \\
b_n^{(k)}&:=& K(\varphi_n^{(k)})(t_{n,i})+y(t_{n,i})- A_n^{(k)}x_n^{(k)}.
\end{eqnarray*} 
 $\varphi_n^{(k+1)}$ is recovered from equation (\ref{nonlinapp}) ~: 

\begin{eqnarray*} 
\varphi_n^{(k+1)}(s)&=& \sum_{j=1}^{n}w_{n,j}(s)L(s,t_{n,j})f_k(t_{n,j})\left( x_n^{(k+1)}(t_{n,j})-x_n^{(k)}(t_{n,j})\right) \\
 &+& K(\varphi_n^{(k)})(s)+y(s).   
\end{eqnarray*}

\section{Convergence property of the new method} 

Existence, uniqueness and regularity properties of the solution  of equation (\ref{problem}) have been already considered (for example by Kaneko, Noren and Xu \cite{kaneko2} or  Pedas and Vainikko   \cite{pedas2}). In this section, we are only interested by the proof of the convergence of 
$\varphi_n^{(k)}$ towards $\varphi$ when $k\rightarrow \infty$.  

\noindent \textbf{Hypotheses:}

\begin{description}
\item [(H0)] $F$, defined in (\ref{defK}), is twice continuously differentiable on $[a,b]\times \RR$.
\item[(H1)]\label{h1}$L\in C^0([a,b]\times [a,b],\RR).$
\item[(H2)]$H$ verifies:
\begin{description}
\item[(H2.1)]$\displaystyle{c_H:=\sup\limits_{s\in [a,b]}\int_a^b|H(s,t)|dt<+\infty.}$
\item[(H2.2)]$\displaystyle{\lim\limits_{h\to 0}\omega_H(h)= 0,}$ where$$
\omega_H(h):=\sup\limits_{|s-\tau |\leq h,\:s,\tau\in[a,b]}\int_a^b|H(s,t)-H(\tau,t)|dt.
$$
\end{description}
\end{description}
\begin{description}
\item[(H3)] $\varphi \in \O$  is an isolated solution of $\varphi-K(\varphi)=f$.
\end{description}
\begin{description}
\item[(H4)]$I-T(\varphi)$ is invertible.
 \end{description}
 
 These assumptions ensure that equations (\ref{problem}),(\ref{intpro}) and (\ref{nonlinapp}) for $n$ large enough, are uniquely solvable (see \cite{atkin}).\\
 Let $a>0$ such that $ B(\varphi,a)\subset \O $, where $B(\varphi,a)$ denotes the open ball in $C^0([a,b],\RR)$ centered in $\varphi$ and of radius $a$. As $\varphi \in  \subset C^0([a,b],\RR)$, $\varphi$ is bounded. As $F$ is twice continuously differentiable, the following constant exists: \begin{eqnarray*}
 &M_2(a)&:=\sup_{t \in [a,b], \vert u\vert\leq a+\Vert \varphi\Vert} \vert \displaystyle\frac{\partial^2 F}{\partial u^2}(t,u)\vert.
 \end{eqnarray*}

The proof of  convergence  relies on  the successive approximations convergence result  (see \cite{kra1} Theorem 2.3. pp 21). Let us recall this result (in a form slicely different from \cite{kra1}).  

\begin{prop}\label{pro1} Consider a nonlinear operator $A$ from a Banach space $\X$ into itself, defined on an open set $\V$. Let $x^* \in \V$ be a  fixed point of $A$. 
Let the operator $A$ be Fr\'echet differentiable at the point $x^*$.
Let us assume that the following condition is fulfilled 
\begin{equation}\label{hyp}
 \rho_0:=\rho(A'(x^*)) < 1,
\end{equation}
where  $\rho$ denotes the spectral radius  and $A'$ denotes  the Fr\'echet derivative of $A$. 

Then, for all $\varepsilon >0$ such that  $\rho_0+\varepsilon< 1$, there exist   $r_{\varepsilon}>0$ and $r'_{\varepsilon}>0$ such that $B(x^*,r_{\varepsilon}) \subset \V$ and $B(x^*,r'_{\varepsilon}) \subset \V$ and such that for   $x^{(0)}$ in $B(x^*,r'_{\varepsilon})$, the successive approximations $x^{(k)}$ defined by
$$
x^{(k+1)}=A(x^{(k)}), \ k\in \NN, 
$$
remain in $B(x^*,r_{\varepsilon})$ for all $k\in \NN$, and the sequence $(x^{(k)})_{k\geq 0}$ converges to $x^*$. Moreover,   
\begin{equation*}
\Vert x^{(k)}-x^* \Vert \leq  r_{\varepsilon} (\rho_0+\varepsilon)^k.
\end{equation*}
\end{prop}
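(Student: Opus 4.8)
The plan is to reduce the statement to the ordinary contraction-mapping argument by replacing the norm of $\X$ by an equivalent one adapted to the bounded linear operator $\Lambda:=A'(x^*)$. Fix $\varepsilon>0$ with $\rho_0+\varepsilon<1$ and pick $q$ with $\rho_0<q<\rho_0+\varepsilon$. By the spectral-radius formula $\Vert\Lambda^n\Vert^{1/n}\to\rho_0<q$, so the series $\sum_{n\geq0}\Vert\Lambda^n\Vert\,q^{-n}$ converges; write $C_0\geq1$ for its (finite) sum. Then
$$
\Vert x\Vert_*:=\sum_{n=0}^{\infty}\frac{\Vert\Lambda^n x\Vert}{q^n},\qquad x\in\X,
$$
defines a norm on $\X$ with $\Vert x\Vert\leq\Vert x\Vert_*\leq C_0\Vert x\Vert$, and a one-line computation gives $\Vert\Lambda x\Vert_*=q\,(\Vert x\Vert_*-\Vert x\Vert)\leq q\,\Vert x\Vert_*$, i.e.\ the $\Vert\cdot\Vert_*$-operator norm of $\Lambda$ is at most $q$.

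Next I would use Fréchet differentiability of $A$ at $x^*$ together with the fixed-point identity $A(x^*)=x^*$ to write, for $x$ near $x^*$,
$$
A(x)-x^*=\Lambda(x-x^*)+R(x),\qquad \Vert R(x)\Vert=o(\Vert x-x^*\Vert)\ \text{ as }x\to x^*.
$$
By equivalence of the two norms this also gives $\Vert R(x)\Vert_*=o(\Vert x-x^*\Vert_*)$. Put $\eta:=\rho_0+\varepsilon-q>0$. There is then $\delta>0$ such that $B(x^*,\delta)\subset\V$ (whence also the $\Vert\cdot\Vert_*$-ball of radius $\delta$ lies in $\V$, since $\Vert\cdot\Vert\leq\Vert\cdot\Vert_*$) and $\Vert R(x)\Vert_*\leq\eta\,\Vert x-x^*\Vert_*$ whenever $\Vert x-x^*\Vert_*\leq\delta$. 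For such $x$,
$$
\Vert A(x)-x^*\Vert_*\leq q\,\Vert x-x^*\Vert_*+\eta\,\Vert x-x^*\Vert_*=(\rho_0+\varepsilon)\,\Vert x-x^*\Vert_*.
$$

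Since $\rho_0+\varepsilon<1$, this shows that $\{x:\Vert x-x^*\Vert_*\leq\delta\}$ is mapped into itself by $A$; hence, starting from any $x^{(0)}$ with $\Vert x^{(0)}-x^*\Vert_*\leq\delta$, the iterates $x^{(k)}=A(x^{(k-1)})$ are well defined, stay in that set, and satisfy $\Vert x^{(k)}-x^*\Vert_*\leq(\rho_0+\varepsilon)^k\Vert x^{(0)}-x^*\Vert_*$ by an immediate induction. It then suffices to set $r_\varepsilon:=\delta$ and $r'_\varepsilon:=\delta/C_0$: one has $B(x^*,r'_\varepsilon)\subset B(x^*,r_\varepsilon)\subset\V$; any $x^{(0)}\in B(x^*,r'_\varepsilon)$ satisfies $\Vert x^{(0)}-x^*\Vert_*\leq C_0\Vert x^{(0)}-x^*\Vert<\delta$; and passing back through $\Vert\cdot\Vert\leq\Vert\cdot\Vert_*$ yields $x^{(k)}\in B(x^*,r_\varepsilon)$, $\Vert x^{(k)}-x^*\Vert\leq r_\varepsilon\,(\rho_0+\varepsilon)^k$, and in particular $x^{(k)}\to x^*$.

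The only step that is not completely routine is the construction of the adapted norm, which rests on the spectral-radius limit $\Vert\Lambda^n\Vert^{1/n}\to\rho_0$; once an equivalent norm turning $\Lambda$ into a contraction up to the negligible remainder $R$ is available, the rest is the standard invariant-ball induction. To avoid renorming one could instead fix $m$ with $\Vert\Lambda^m\Vert\leq(\rho_0+\tfrac{\varepsilon}{2})^m$ and iterate the estimate over blocks of $m$ steps, at the cost of a more awkward choice of the radii $r_\varepsilon$ and $r'_\varepsilon$.
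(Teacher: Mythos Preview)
Your argument is correct. The renorming via $\Vert x\Vert_*:=\sum_{n\geq0}q^{-n}\Vert\Lambda^n x\Vert$ is a standard device to turn the spectral condition $\rho(\Lambda)<1$ into a genuine contraction estimate, and once that is in hand the invariant-ball induction goes through exactly as you wrote. The only point worth making explicit is that the choice of $\delta$ must simultaneously guarantee $B(x^*,\delta)\subset\V$ (so that $A$ and hence $R$ are defined) and the remainder bound $\Vert R(x)\Vert_*\leq\eta\Vert x-x^*\Vert_*$; you address this implicitly, and taking the minimum of the two radii suffices.

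As for comparison with the paper: the paper does not prove this proposition. It is quoted, in a slightly reformulated form, from Krasnoselskii, Vainikko, Zabreiko, Rutitskii and Stetsenko (\emph{Approximate Solution of Operator Equations}, Theorem~2.3), and the authors immediately proceed to the lemmas needed for their main theorem. Your proof is in fact close in spirit to the argument given in that reference, which also relies on passing to an equivalent norm adapted to $A'(x^*)$; the block-iteration alternative you mention at the end is another classical route to the same conclusion.
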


The following four lemmas are needed to prove our main result.

\begin{lem}  
For all $a>0$ such that $ B(\varphi,a)\subset \O $,
  for all $x\in B(\varphi,a)$,
 \begin{eqnarray}\label{Tlip} 
\Vert T(x)-T(\varphi)\Vert \leq c_H c_L  M_2(a) \Vert x-\varphi\Vert,
 \end{eqnarray} 
  where$$
c_L:=\max_{s,t\in [a,b]}|L(s,t)|.
$$
\end{lem}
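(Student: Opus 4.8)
The plan is to estimate the operator norm $\Vert T(x)-T(\varphi)\Vert$ directly from the integral representation \eqref{defT}. For an arbitrary $h\in\X$ with $\Vert h\Vert\le 1$ and any $s\in[a,b]$, I would write
\[
\bigl(T(x)h-T(\varphi)h\bigr)(s)=\integ{a}{b}{H(s,t)L(s,t)\Bigl(\dfrac{\partial F}{\partial u}(t,x(t))-\dfrac{\partial F}{\partial u}(t,\varphi(t))\Bigr)h(t)}{t},
\]
so that, taking absolute values and pulling out the supremum bounds $c_L$ for $|L(s,t)|$ and $\Vert h\Vert\le 1$,
\[
\bigl|\bigl(T(x)h-T(\varphi)h\bigr)(s)\bigr|\le c_L\integ{a}{b}{|H(s,t)|\,\Bigl|\dfrac{\partial F}{\partial u}(t,x(t))-\dfrac{\partial F}{\partial u}(t,\varphi(t))\Bigr|}{t}.
\]

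Next I would control the difference of the partial derivatives pointwise. Since $x\in B(\varphi,a)$, for every $t$ we have $|x(t)|\le\Vert\varphi\Vert+a$ and $|\varphi(t)|\le\Vert\varphi\Vert$, so the segment joining $\varphi(t)$ to $x(t)$ stays inside $[-(\Vert\varphi\Vert+a),\Vert\varphi\Vert+a]$. By the mean value theorem applied to $u\mapsto\dfrac{\partial F}{\partial u}(t,u)$ and the definition of $M_2(a)$ as the supremum of $|\partial^2F/\partial u^2|$ on that set, one gets $\bigl|\dfrac{\partial F}{\partial u}(t,x(t))-\dfrac{\partial F}{\partial u}(t,\varphi(t))\bigr|\le M_2(a)\,|x(t)-\varphi(t)|\le M_2(a)\,\Vert x-\varphi\Vert$. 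Substituting this bound and using \textbf{(H2.1)}, namely $\int_a^b|H(s,t)|\,dt\le c_H$ uniformly in $s$, yields
\[
\bigl|\bigl(T(x)h-T(\varphi)h\bigr)(s)\bigr|\le c_H\,c_L\,M_2(a)\,\Vert x-\varphi\Vert .
\]
Taking the supremum over $s\in[a,b]$ and then over $\Vert h\Vert\le 1$ gives \eqref{Tlip}.

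There is no real obstacle here; the only points requiring a little care are the justification that the chord between $\varphi(t)$ and $x(t)$ lies in the domain on which $M_2(a)$ is defined (which is exactly why $M_2(a)$ was introduced with the radius $a+\Vert\varphi\Vert$), and the fact that the constant $c_H$ from \textbf{(H2.1)} is uniform in $s$, so that the final bound does not depend on $s$. Everything else is the standard chain of estimates $\sup_s\int|H|\cdot(\text{Lipschitz in }u)\cdot c_L$.
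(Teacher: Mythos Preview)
Your proof is correct and follows essentially the same route as the paper: both apply the mean value theorem to $u\mapsto\partial F/\partial u(t,u)$ to obtain the pointwise Lipschitz bound $M_2(a)\,|x(t)-\varphi(t)|$, then use $c_L$ and hypothesis \textbf{(H2.1)} to conclude. Your write-up is in fact slightly more careful than the paper's in justifying that the intermediate point lies in the set where $M_2(a)$ is defined.
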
 
\begin{proof}
 From the  mean value theorem  applied to $\dfrac{\partial F}{\partial u}$, for all $t$, there exist  a real number $c(t)$ between $\varphi(t)$ and $x(t)$ such that
$$  
\dfrac{\partial F}{\partial u}(t,x(t))-\dfrac{\partial F}{\partial u}(t,\varphi(t))=\dfrac{\partial^2 F}{\partial u^2}(t,c(t)) (x(t)-\varphi(t)).  
$$
As $\forall t \in [a,b], \vert c(t)\vert \leq r+\Vert \varphi  \Vert$, 
\begin{eqnarray}\label{Tlip} 
\Vert T(x)-T(\varphi)\Vert &\leq& \Vert x-\varphi \Vert \sup\limits_{s\in [a,b]}\int_a^b \vert H(s,t)\vert \vert L(s,t)\vert \vert \dfrac{\partial^2 F}{\partial u^2}(t,c(t))\vert dt \\
&\leq&  c_H c_L  M_2(a) \Vert x-\varphi\Vert.
 \end{eqnarray} 
Hence (\ref{Tlip}) is  deduced. 
\end{proof}
Now $a$ is fixed such that $B(\varphi,a)\subset \O$. 

\begin{lem} \label{lem2}
 There is a positive number $r<a$ such that for all $x \in B(\varphi,r)$, $I-T(x)$ is invertible and  
\begin{eqnarray} 
\Vert (I-T(x))^{-1}\Vert \leq 2\mu, 
\end{eqnarray}
where $\mu:=\Vert (I-T(\varphi))^{-1}\Vert$.\\   Moreover, for all $x \in B(\varphi,r)$,  for $n$ large enough, $I-T_n(x)$ is invertible and  there exists a constant $c_x>0,$ independent of $n$,  such that
 \begin{eqnarray} 
 \Vert (I-T_n(x))^{-1}\Vert \leq  c_x. 
 \end{eqnarray} 
\end{lem}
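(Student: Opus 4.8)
The plan is to prove the two assertions of Lemma~\ref{lem2} separately, both by perturbation arguments around the invertible operator $I-T(\varphi)$ guaranteed by (H4).

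\textbf{Invertibility of $I-T(x)$ and the bound $2\mu$.} First I would write the standard Banach-lemma identity
$$
I-T(x) = (I-T(\varphi))\bigl(I - (I-T(\varphi))^{-1}(T(\varphi)-T(x))\bigr).
$$
By Lemma~1, $\Vert T(\varphi)-T(x)\Vert \leq c_H c_L M_2(a)\Vert x-\varphi\Vert$ for $x\in B(\varphi,a)$, so choosing
$$
r := \min\Bigl\{\frac{a}{2},\ \frac{1}{2\mu\, c_H c_L M_2(a)}\Bigr\}
$$
(with the obvious convention if $c_H c_L M_2(a)=0$) ensures that for $x\in B(\varphi,r)$ the perturbation term has norm at most $\tfrac12$. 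The Neumann series then shows $I-T(x)$ is invertible with
$$
\Vert (I-T(x))^{-1}\Vert \leq \frac{\mu}{1-\tfrac12} = 2\mu,
$$
which is the first claim.

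\textbf{Invertibility of $I-T_n(x)$ and the uniform bound $c_x$.} This is the genuinely delicate part, because the product-integration operators $T_n(x)$ do \emph{not} converge to $T(x)$ in operator norm (the kernel is singular), so one cannot simply perturb $I-T(x)$ by $T(x)-T_n(x)$ in norm. Instead I would use the collectively-compact / pointwise-convergence framework of Atkinson already invoked after (H4): under (H1) and (H2), for fixed $x$ the family $\{T_n(x)\}_n$ is collectively compact and $T_n(x)h\to T(x)h$ for every $h\in\X$, the convergence being uniform on compact sets; equivalently $\Vert (T_n(x)-T(x))T(x)\Vert\to 0$ and $\Vert (T_n(x)-T(x))T_n(x)\Vert\to 0$ as $n\to\infty$. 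Since $I-T(x)$ is invertible (first part), the standard collectively-compact perturbation theorem (Atkinson~\cite{atkin}, or Anselone~\cite{anselone1}) yields that $I-T_n(x)$ is invertible for $n$ large enough and that $\Vert(I-T_n(x))^{-1}\Vert$ is bounded uniformly in $n$; call this bound $c_x$. The dependence on $x$ (and not on $n$) is exactly what the statement allows. One technical point to verify in passing is (H2.1)–(H2.2) together with (H1): these give precisely the equicontinuity in $s$ and the uniform integrability needed for $\{T_n(x)\}$ to be collectively compact as operators on $C^0([a,b],\RR)$ — the weights $w_{n,j}(s)$ satisfy $\sum_j|w_{n,j}(s)|\leq c_H$ plus an $\omega_H$-type modulus of continuity in $s$, so each $T_n(x)$ maps bounded sets into bounded, equicontinuous sets.

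\textbf{Main obstacle.} The crux is the second bullet: one must resist the temptation to argue by norm-perturbation of $T(x)$ by $T_n(x)$, and instead quote the collectively-compact operator machinery correctly, checking that hypotheses (H1) and (H2) supply collective compactness of $\{T_n(x)\}_n$ and pointwise convergence $T_n(x)\to T(x)$. Once that is in place, the uniform bound $c_x$ is an immediate output of the perturbation theorem. I would keep $c_x$ abstract (it will be made quantitative, if needed, later via $\Vert(I-T_n(x))^{-1}\Vert\leq \bigl(1-\Vert(I-T(x))^{-1}(T_n(x)-T(x))T_n(x)\Vert\bigr)^{-1}\bigl(1+\Vert(I-T(x))^{-1}\Vert\,\Vert T_n(x)\Vert\bigr)$ type estimates), since the later use of the lemma only needs boundedness in $n$.
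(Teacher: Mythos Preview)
Your proposal is correct and follows essentially the same route as the paper: the Neumann-series perturbation of $I-T(\varphi)$ using the Lipschitz bound from Lemma~1 for the first claim, and the collectively-compact/pointwise-convergence theory of Anselone--Atkinson applied to $\{T_n(x)\}_n$ for the second. Your extra remarks on why norm-convergence fails and on the equicontinuity of the weights $w_{n,j}$ are more detailed than what the paper records, but the argument is the same.
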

\begin{proof}

Let $0<r<a$ be such that   $  r      \leq \displaystyle\frac{1}{2\mu c_H c_L M_2(a)}.$
 
For all $x\in B(\varphi,r)$,
$$
I-T(x)=I-T(\varphi)+T(\varphi)-T(x)=(I-T(\varphi))[I+(I-T(\varphi))^{-1}( T(\varphi)-T(x))].
$$

Since 
$$
\|(I-T(\varphi))^{-1}(T(\varphi)-T(x))\|\le \mu
 \|T(\varphi)-T(x)\|\le\mu c_H c_L   M_2(a)r    
     \le\frac{1}{2},
$$
we conclude that $I-T(x)$ is invertible and that its inverse is uniformly bounded on $B(\varphi,r)$. In fact
$$
(I-T(x))^{-1}=\big[ I+(I-T(\varphi))^{-1}( T(\varphi)-T(x))\big]^{-1}(I-T(\varphi))^{-1},
$$
so
$$
\|(I-T(x))^{-1}\|\le\mu\sum\limits_{k=0}^\infty\|(I-T(\varphi))^{-1}( T(\varphi)-T(x))\|^k\le2\mu.
$$
The function $(s,t)\rightarrow L(s,t)\dfrac{\partial F}{\partial u}(t,x(t))$ is in $C^0([a,b]\times [a,b],\RR)$. Hence,
according to \cite{atkin} or \cite{anselone1},   $T_n(x) \LL\limits^{p} T(x)$,
 where $\LL\limits^{p}$ denotes the  pointwise convergence, and the sequence $(T_n(x))_{n\geq 0}$ is collectively compact.

For all $x \in B(\varphi,r)$, $T_n(x)$ is a collectively compact approximation of  $T(x)$ (see \cite{anselone1}). Hence for $n$ large enough, $I-T_n(x)$ is invertible and $(I-T_n(x))^{-1}$ is uniformly bounded in $n$. This means that there is a constant $c_x$ such that for $n$ large enough, $$
\Vert (I-T_n(x))^{-1}\Vert \leq c_x.
$$
This ends the proof.  
 
\end{proof}

 Let $A_n$ be the operator defined on   $B(\varphi,r)$   by 
 \begin{eqnarray}\label{A}
 A_n(x):=x+S_n(x)\left( K(x)+y-x\right),
\end{eqnarray} 
where 
\begin{eqnarray}\label{S}
 S_n(x):=(I-T_n(x))^{-1}.
\end{eqnarray} 
Notice that we have 
\begin{eqnarray}\label{ptfixean}
A_n(\varphi)=\varphi.
\end{eqnarray} 

\begin{lem}\label{lem3}
The operator $S_n$ is Fr\'echet differentiable at  $\varphi$.  
\end{lem}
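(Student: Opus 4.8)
The plan is to differentiate $S_n(x)=(I-T_n(x))^{-1}$ at $\varphi$ by the usual operator-resolvent identity, reducing everything to the Fr\'echet differentiability of the map $x\mapsto T_n(x)$ at $\varphi$. First I would establish that $x\mapsto T_n(x)$ is Fr\'echet differentiable at $\varphi$ with derivative $[T_n'(\varphi)h](g)(s)=\int_a^b H(s,t)\,[L(s,t)\,g(t)\,h(t)\,\partial^2_u F(t,\varphi(t))]_n\,dt$, i.e. the product-integration discretization of the bilinear form obtained by differentiating $f_x$; this is immediate since $T_n(x)$ depends on $x$ only through the finitely many point values $f_x(t_{n,j})=\partial_u F(t_{n,j},x(t_{n,j}))$ and the quadrature weights $w_{n,j}(s)$ are fixed, so $T_n$ is a finite sum of terms each smooth in $x$ via (H0), and a Taylor expansion of $\partial_u F$ in its second argument together with boundedness of the $w_{n,j}(s)$ (which follows from (H2.1)) gives the remainder estimate $\|T_n(x)-T_n(\varphi)-T_n'(\varphi)(x-\varphi)\|=o(\|x-\varphi\|)$ uniformly — in fact the constant here will depend on $n$, which is harmless for a fixed-$n$ statement.

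Next I would use the algebraic identity, valid whenever $I-T_n(x)$ and $I-T_n(\varphi)$ are invertible,
\begin{equation*}
S_n(x)-S_n(\varphi)=S_n(x)\bigl(T_n(x)-T_n(\varphi)\bigr)S_n(\varphi).
\end{equation*}
By Lemma~\ref{lem2}, for $n$ large enough $S_n$ is well-defined and uniformly bounded on $B(\varphi,r)$, say by $c_\varphi$; combined with the continuity of $x\mapsto T_n(x)$ at $\varphi$ this shows $S_n$ is continuous at $\varphi$, hence $S_n(x)\to S_n(\varphi)$ as $x\to\varphi$. Then I would write, with $v:=x-\varphi$,
\begin{equation*}
S_n(x)-S_n(\varphi)-S_n(\varphi)\,T_n'(\varphi)(v)\,S_n(\varphi)
= \bigl(S_n(x)-S_n(\varphi)\bigr)\bigl(T_n(x)-T_n(\varphi)\bigr)S_n(\varphi)
+ S_n(\varphi)\,R_n(v)\,S_n(\varphi),
\end{equation*}
where $R_n(v):=T_n(x)-T_n(\varphi)-T_n'(\varphi)(v)$. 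The second term is $o(\|v\|)$ by the differentiability of $T_n$ established above; the first term is bounded by $\|S_n(x)-S_n(\varphi)\|\cdot L_n\|v\|\cdot c_\varphi$ where $L_n$ is a Lipschitz constant for $T_n$ near $\varphi$ (again finite for fixed $n$, e.g. from the mean value theorem applied to $\partial_u F$ exactly as in Lemma~1), and since $\|S_n(x)-S_n(\varphi)\|\to0$ this term is also $o(\|v\|)$. Hence $S_n$ is Fr\'echet differentiable at $\varphi$ with $S_n'(\varphi)(v)=S_n(\varphi)\,T_n'(\varphi)(v)\,S_n(\varphi)$.

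The only genuinely delicate point is making the differentiability of $x\mapsto T_n(x)$ rigorous, and there the subtlety is purely one of bookkeeping: one must check that the remainder in expanding $\partial_u F(t_{n,j},\varphi(t_{n,j})+v(t_{n,j}))$ to first order is controlled in sup-norm uniformly over $s$, which works because $|w_{n,j}(s)|\le \frac1{h_n}\int_{t_{n,j-1}}^{t_{n,j+1}}|H(s,t)|\,dt$ and $\sum_j$ of these is $\le \frac{2}{h_n}c_H$, a finite $n$-dependent constant, while $M_2(a)$ from (H0) bounds $\partial^2_u F$ on the relevant compact set. I expect this estimate — confirming that the $n$-dependent constants are all finite and that no uniformity in $n$ is needed for the present lemma — to be the main (though routine) obstacle; everything else is the standard resolvent calculus.
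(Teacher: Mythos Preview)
Your proposal is correct and follows the same route as the paper: differentiate $x\mapsto T_n(x)$ first (using that it depends on $x$ only through the finitely many values $\partial_u F(t_{n,j},x(t_{n,j}))$), then push this through the resolvent identity to obtain $S_n'(\varphi)v=S_n(\varphi)\,T_n'(\varphi)(v)\,S_n(\varphi)$. The paper's own proof is considerably more heuristic---it writes $(L+E)^{-1}-L^{-1}\simeq -L^{-1}EL^{-1}$ and stops there---whereas you actually split the remainder via $S_n(x)-S_n(\varphi)=S_n(x)(T_n(x)-T_n(\varphi))S_n(\varphi)$ and bound the two error pieces, which is the honest way to do it.

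One small remark: your weight estimate is looser than necessary. Since $|t-t_{n,j-1}|\le h_n$ and $|t_{n,j+1}-t|\le h_n$ on the relevant intervals, one has $|w_{n,j}(s)|\le\int_{t_{n,j-1}}^{t_{n,j+1}}|H(s,t)|\,dt$ and hence $\sum_j|w_{n,j}(s)|\le 2c_H$, uniformly in $n$---not $\tfrac{2}{h_n}c_H$. This does not matter for the present fixed-$n$ lemma, as you correctly note, but the sharper bound is what makes the later uniform-in-$n$ estimates go through.
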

  \begin{proof}
 As for all  $h\in
 \X$,  $T_n(x)(h)(s)=\displaystyle\sum_{j=0}^{n} w_{n,j}(s)L(s,t_{n,j})\displaystyle\frac{\partial F}{\partial u}(t_{n,j},x(t_{n,j}))h(t_{n,j})$,  the operator $x \mapsto T_n(x)$ is differentiable at $\varphi$, and we have for all  $h\in
 \X$ and $\delta \in \X$, 
$$T'_n(x)(\delta,h)(s)=\sum_{j=0}^{n}w_{n,j}(s) L(s,t_{n,j}) \displaystyle\frac{\partial^2 F}{\partial u^2}(t_{n,j},x(t_{n,j}))\delta(t_{n,j})  h(t_{n,j}).$$
As, at the first order, $(L+E)^{-1}-(L)^{-1}\backsimeq -(L)^{-1}E(L)^{-1} $, we have
\begin{eqnarray*}
S_n(\varphi+\delta)-S_n(\varphi)&=&(I-T_n(\varphi+\delta))^{-1}-(I-T_n(\varphi))^{-1}\\
&=&(I-T_n(\varphi)-T_n'(\varphi)\delta+O(\delta^2)))^{-1}-(I-T_n(\varphi))^{-1}\\
&\backsimeq& S_n(\varphi)T_n'(\varphi)\delta S_n(\varphi), \\
\end{eqnarray*}
so that $S_n$ is Fr\'echet differentiable at  $\varphi$ and
\begin{eqnarray*}
S'_n(\varphi)\delta= S_n(\varphi)T_n'(\varphi)\delta S_n(\varphi).
\end{eqnarray*}
  \end{proof}
\begin{lem}\label{lem4}
The operator $A_n$ is Fr\'echet differentiable at $\varphi$. For $n$ large enough, 
\begin{eqnarray}  
\rho(A'_n(\varphi)) < 1.
\end{eqnarray}
\end{lem}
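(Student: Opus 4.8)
The plan is to compute $A_n'(\varphi)$ explicitly and show its spectral radius is below $1$ for $n$ large by comparing it with $A'(\varphi)$, whose spectral radius is $0$ because the continuous Newton iteration map has a superattracting fixed point at $\varphi$. First I would differentiate $A_n(x)=x+S_n(x)(K(x)+y-x)$ at $\varphi$ using the product rule: since $K(\varphi)+y-\varphi=0$ (as $\varphi$ solves (\ref{problem})), the term involving $S_n'(\varphi)$ applied to $\delta$ gets multiplied by $K(\varphi)+y-\varphi=0$ and drops out. What survives is
\[
A_n'(\varphi)\delta=\delta+S_n(\varphi)\bigl(T(\varphi)\delta-\delta\bigr)=\delta-S_n(\varphi)(I-T(\varphi))\delta=\bigl(I-S_n(\varphi)(I-T(\varphi))\bigr)\delta,
\]
using $K'(\varphi)=T(\varphi)$ and Lemma \ref{lem3} for differentiability of $S_n$ (hence of $A_n$) at $\varphi$. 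Rewriting, $A_n'(\varphi)=S_n(\varphi)\bigl[(I-T_n(\varphi))-(I-T(\varphi))\bigr]=S_n(\varphi)\bigl(T(\varphi)-T_n(\varphi)\bigr)$.

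Next I would estimate the norm of $A_n'(\varphi)$. By Lemma \ref{lem2}, $\|S_n(\varphi)\|=\|(I-T_n(\varphi))^{-1}\|\le c_\varphi$ is bounded uniformly in $n$. So it suffices to show $\|T(\varphi)-T_n(\varphi)\|\to 0$, i.e. that the product-integration approximation converges in operator norm for the fixed smooth integrand $(s,t)\mapsto L(s,t)f_\varphi(t)$. This is the classical error estimate for product trapezoidal integration of a weakly singular kernel against a continuous function: using (H1), (H0) (which make $L(s,\cdot)f_\varphi(\cdot)$ continuous, indeed equicontinuous in $s$ by (H2.2) and uniform continuity), together with (H2.1)–(H2.2), one gets $\|T(\varphi)-T_n(\varphi)\|\le c_H\cdot\omega\bigl(\text{modulus of continuity of }L f_\varphi,h_n\bigr)+(\text{terms}\to0)$, which tends to $0$ as $n\to\infty$ since $h_n=(b-a)/n\to0$; alternatively one cites \cite{atkin} directly for $\|T_n(\varphi)-T(\varphi)\|\to0$. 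Consequently $\rho(A_n'(\varphi))\le\|A_n'(\varphi)\|\le c_\varphi\,\|T(\varphi)-T_n(\varphi)\|\to0$, so for $n$ large enough $\rho(A_n'(\varphi))<1$, which is the claim.

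The main obstacle is establishing the norm convergence $\|T(\varphi)-T_n(\varphi)\|\to 0$ rather than merely pointwise (collectively compact) convergence: the earlier lemmas only used $T_n(x)\xrightarrow{p}T(x)$ collectively compact, which is weaker. Here one genuinely needs the operator-norm estimate, and this is where the product-integration error analysis enters — one must bound $\sup_s\int_a^b|H(s,t)|\,|L(s,t)f_\varphi(t)-[L(s,t)f_\varphi(t)]_n|\,dt$ uniformly in $s$, using that piecewise-linear interpolation error of a continuous function is controlled by its modulus of continuity, and that $c_H<\infty$ by (H2.1). One should be slightly careful that the interpolation is in the $t$ variable with $s$ as a parameter, so the relevant modulus of continuity is that of $t\mapsto L(s,t)f_\varphi(t)$ uniformly in $s$, which follows from joint continuity on the compact square $[a,b]^2$. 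Once this uniform estimate is in hand, the rest is immediate from Lemma \ref{lem2} and the fact that spectral radius is dominated by operator norm.
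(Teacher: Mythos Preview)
Your computation of $A_n'(\varphi)=I-S_n(\varphi)(I-T(\varphi))=S_n(\varphi)\bigl(T(\varphi)-T_n(\varphi)\bigr)$ is correct and matches the paper. The gap is in the next step: the claimed operator-norm convergence $\|T(\varphi)-T_n(\varphi)\|\to 0$ is not true in general, and your argument for it rests on a misreading of the definition of $T_n$. In the product integration scheme of this paper the piecewise linear interpolation is applied to the full product $t\mapsto L(s,t)f_\varphi(t)h(t)$, not just to $L(s,t)f_\varphi(t)$; see the display defining $[L(s,t)f_x(t)h(t)]_n$ and the formula $T_n(x)h(s)=\sum_j w_{n,j}(s)L(s,t_{n,j})f_x(t_{n,j})h(t_{n,j})$. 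Thus $T_n(\varphi)$ is a Nystr\"om-type finite-rank operator depending on $h$ only through the point values $h(t_{n,j})$. For such approximations one has pointwise convergence and collective compactness but \emph{not} norm convergence: take $L\equiv 1$, $f_\varphi\equiv 1$, $H\equiv 1$ (which satisfies (H1)--(H2)) and let $h_n$ be the piecewise linear function with $h_n(t_{n,j})=0$ and $h_n=1$ at the midpoints; then $T_n(\varphi)h_n=0$ while $T(\varphi)h_n=\int_0^1 h_n=\tfrac12$, so $\|T(\varphi)-T_n(\varphi)\|\ge\tfrac12$ for every $n$. Your bound $\sup_s\int|H(s,t)|\,|L(s,t)f_\varphi(t)-[L(s,t)f_\varphi(t)]_n|\,dt$ is simply not the quantity that controls $\|T(\varphi)-T_n(\varphi)\|$.

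The paper circumvents exactly this obstruction by estimating the spectral radius through the \emph{square}: $\rho(A_n'(\varphi))\le\|(A_n'(\varphi))^2\|^{1/2}$, and
\[
(A_n'(\varphi))^2=S_n(\varphi)\bigl(T(\varphi)-T_n(\varphi)\bigr)\,S_n(\varphi)\bigl(T(\varphi)-T_n(\varphi)\bigr).
\]
Since $\{T_n(\varphi)\}$ is collectively compact and $\|S_n(\varphi)\|\le c_\varphi$, the set $\bigcup_n\{S_n(\varphi)(T(\varphi)-T_n(\varphi))x:\|x\|\le1\}$ is relatively compact; on this compact set the pointwise convergence $T_n(\varphi)\to T(\varphi)$ is uniform, which gives $\|(A_n'(\varphi))^2\|\to 0$ and hence $\rho(A_n'(\varphi))\to 0$. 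This ``square then use collective compactness'' device is the key idea you are missing; it is precisely what replaces the unavailable norm convergence.
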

\begin{proof}
Notice that  $A_n(\varphi)= \varphi$, hence 
 \begin{eqnarray*}  
A_n(\varphi+h)-A_n(\varphi)&=& \varphi+h+ S_n(\varphi+h)\left( K(\varphi+h)+y-\varphi-h\right)-\varphi\\
&=& h+S_n(\varphi+h)\left( K(\varphi)+T(\varphi)h+O(h^2)+y-\varphi-h\right)\\
&=& h+S_n(\varphi+h)\left(  (T(\varphi)-I)h+O(h^2)  \right),
\end{eqnarray*} 
hence $A_n$ is differentiable at $\varphi$ and 
\begin{eqnarray}\label{A'}  
A'_n(\varphi)=  I-S_n(\varphi)(I-T(\varphi)).
\end{eqnarray}  
 We have 
 \begin{eqnarray}
 \rho\left( I-S_n(\varphi)(I-T(\varphi)) \right) &=&\inf_n \Vert 
 (I-\S_n(\varphi)(I-T(\varphi)))^n\Vert^{ \frac{1}{n}} \\
 &\leq& \Vert 
 (I-\S_n(\varphi)(I-T(\varphi)))^2\Vert^{ \frac{1}{2}}. 
 \end{eqnarray}
Since $\left(I-\S_n(\varphi)(I-T(\varphi))\right)
 =S_n(\varphi) \left(T(\varphi)-T_n(\varphi)\right)$,
 \begin{align*}
\Vert \displaystyle\left(I-S_n(\varphi)(I-T(\varphi))\right)^2 \Vert &= \Vert S_n(\varphi)\left(T(\varphi)-T_n(\varphi)\right)S_n(\varphi)\left(T(\varphi)-T_n(\varphi)\right)\Vert \\
 & \leq  \Vert S_n(\varphi)\Vert \Vert \left(T(\varphi)-T_n(\varphi)\right)S_n(\varphi)\left(T(\varphi)-T_n(\varphi)\right)\Vert\\
 & \leq  c_{\varphi} \Vert \left(T(\varphi)-T_n(\varphi)\right)S_n(\varphi)\left(T(\varphi)-T_n(\varphi)\right)\Vert.
 \end{align*}
 As $S_n(\varphi)$ is uniformly bounded 
 (see Lemma  \ref{lem2}) and ($T(\varphi)-T_n(\varphi)$) is collectively compact, the closure of the  set 
 $ S:=\displaystyle\cup_n \{ S_n(\varphi)(T(\varphi)-T_n(\varphi))x, \Vert x \Vert \leq 1 \}$ is compact so that 
 \begin{align*}
\Vert \displaystyle\left(I-S_n(\varphi)(I-T(\varphi)\right)^2 \Vert  
 & \leq  c_{\varphi} \sup_{x \in S} \Vert \left(T(\varphi)-T_n(\varphi)\right)x\Vert \LL\limits_{n \rightarrow \infty}  0.
 \end{align*}
 Then 
 \begin{eqnarray*}  
\rho(A'_n(\varphi)) \LL\limits_{n \rightarrow \infty}  0.
\end{eqnarray*}
 
This ends the proof.  

\end{proof}

\begin{thm}\label{mainresult} 
Under assumptions {\bf (H0) to (H4)}, there exists $r>0$ such that, for a fixed $n$ large enough to have 
$$
\rho_n:=\rho  \left( A'_n(\varphi)\right) < 1,
$$ and for any $\varepsilon >0$ such that $\rho_n+\varepsilon < 1$, there exist 
$\  B(\varphi, r'_{n,\varepsilon})\subset B(\varphi, r)$ and $B(\varphi,r_{n,\varepsilon})\subset B(\varphi, r)$ such that, 
if $\varphi_n^{(0)} \in B(\varphi, r'_{n,\varepsilon})$, then the sequence $(\varphi_n^{(k)})_{k\geq 0}$ solution of 
\begin{eqnarray*} 
(I-T_n(\varphi_n^{(k)}))(\varphi_n^{(k+1)}-  \varphi_n^{(k)})  =
  -\varphi_n^{(k)}+K(\varphi_n^{(k)})+f,
\end{eqnarray*}
is defined, belongs to  $B(\varphi, r_{n,\varepsilon})$
    and 
$$
\varphi_n^{(k)}\LL\limits_{k\to\infty}  \varphi.
$$    
Moreover, the following estimation holds:  
\begin{equation} 
\Vert \varphi_n^{(k)}-\varphi \Vert \leq r_{n,\varepsilon} \left( \rho_n+\varepsilon\right)^k.
\end{equation}   
\end{thm}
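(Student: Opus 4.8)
The plan is to deduce Theorem~\ref{mainresult} directly from Proposition~\ref{pro1} applied to the operator $A_n$ defined in (\ref{A}), once we have verified that its hypotheses are met. First I would observe that the Newton--type iteration (\ref{intpro}) is exactly the fixed-point iteration $\varphi_n^{(k+1)}=A_n(\varphi_n^{(k)})$: indeed, from $(I-T_n(\varphi_n^{(k)}))(\varphi_n^{(k+1)}-\varphi_n^{(k)})=-\varphi_n^{(k)}+K(\varphi_n^{(k)})+y$ one solves for $\varphi_n^{(k+1)}$ using $S_n(\varphi_n^{(k)})=(I-T_n(\varphi_n^{(k)}))^{-1}$, which is legitimate for $n$ large whenever $\varphi_n^{(k)}\in B(\varphi,r)$ by Lemma~\ref{lem2}, and this yields precisely $\varphi_n^{(k+1)}=\varphi_n^{(k)}+S_n(\varphi_n^{(k)})(K(\varphi_n^{(k)})+y-\varphi_n^{(k)})=A_n(\varphi_n^{(k)})$.

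Next I would assemble the three ingredients Proposition~\ref{pro1} requires, all of which are already in place. The ball $B(\varphi,r)$ from Lemma~\ref{lem2} plays the role of $\V$; on it $A_n$ is well defined for $n$ large. That $\varphi$ is a fixed point of $A_n$ is (\ref{ptfixean}), which follows from $K(\varphi)+y-\varphi=0$. That $A_n$ is Fr\'echet differentiable at $\varphi$, with $A_n'(\varphi)=I-S_n(\varphi)(I-T(\varphi))$, is Lemma~\ref{lem4}, which rests on Lemma~\ref{lem3} for the differentiability of $S_n$ and on the expansion $K(\varphi+h)=K(\varphi)+T(\varphi)h+O(h^2)$. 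Finally, the spectral-radius condition $\rho(A_n'(\varphi))<1$ is the content of Lemma~\ref{lem4}: in fact $\rho(A_n'(\varphi))\to 0$ as $n\to\infty$, so we may fix any $n$ large enough that $\rho_n:=\rho(A_n'(\varphi))<1$.

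With these verified, I would invoke Proposition~\ref{pro1} with $A=A_n$, $x^*=\varphi$, $\V=B(\varphi,r)$: for every $\varepsilon>0$ with $\rho_n+\varepsilon<1$ it produces radii $r_{n,\varepsilon}$ and $r'_{n,\varepsilon}$ with $B(\varphi,r'_{n,\varepsilon})\subset B(\varphi,r)$ and $B(\varphi,r_{n,\varepsilon})\subset B(\varphi,r)$ such that any start $\varphi_n^{(0)}\in B(\varphi,r'_{n,\varepsilon})$ generates iterates $\varphi_n^{(k)}=A_n(\varphi_n^{(k)})$ staying in $B(\varphi,r_{n,\varepsilon})$, converging to $\varphi$, with $\Vert\varphi_n^{(k)}-\varphi\Vert\le r_{n,\varepsilon}(\rho_n+\varepsilon)^k$. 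Since, as noted in the first step, the iterates so produced coincide with the solutions of the stated Newton scheme (here the right-hand side datum is the given $f$), this is exactly the conclusion of the theorem. One technical point worth spelling out is that staying in $B(\varphi,r_{n,\varepsilon})\subset B(\varphi,r)$ guarantees $I-T_n(\varphi_n^{(k)})$ remains invertible at each step (Lemma~\ref{lem2}), so the scheme is indeed well defined throughout the induction and the identification $\varphi_n^{(k+1)}=A_n(\varphi_n^{(k)})$ is valid at every iteration.

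The genuinely substantive work has all been done in the preceding lemmas; the main obstacle, already overcome in Lemma~\ref{lem4}, is showing $\rho(A_n'(\varphi))<1$ for $n$ large. The delicate part there is that one cannot bound the spectral radius by the norm $\Vert A_n'(\varphi)\Vert$ alone (that norm need not be small), but must exploit collective compactness of $T(\varphi)-T_n(\varphi)$ together with the uniform boundedness of $S_n(\varphi)$ to show the \emph{square} $(A_n'(\varphi))^2=(S_n(\varphi)(T(\varphi)-T_n(\varphi)))^2$ has norm tending to $0$, via the precompactness of $S:=\overline{\bigcup_n\{S_n(\varphi)(T(\varphi)-T_n(\varphi))x:\Vert x\Vert\le1\}}$ and the pointwise convergence $T_n(\varphi)x\to T(\varphi)x$. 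For the theorem itself, beyond citing Proposition~\ref{pro1}, essentially nothing remains but this bookkeeping identification of the iteration with the fixed-point map and the remark on well-posedness of each step.
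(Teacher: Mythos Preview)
Your proposal is correct and follows exactly the paper's own approach: verify that $\varphi$ is a fixed point of $A_n$, that $A_n$ is Fr\'echet differentiable at $\varphi$ with $\rho(A_n'(\varphi))<1$ for $n$ large (Lemma~\ref{lem4}), and then apply Proposition~\ref{pro1}. Your write-up is in fact more detailed than the paper's, explicitly spelling out the identification of the Newton scheme with the fixed-point iteration $\varphi_n^{(k+1)}=A_n(\varphi_n^{(k)})$ and the well-posedness of each step via Lemma~\ref{lem2}, which the paper leaves implicit.
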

\begin{proof}
The exact solution $\varphi$ of the nonlinear problem (\ref{problem}) is a fixed point of $A_n$ (see (\ref{ptfixean})). From Lemma \ref{lem4}, $A_n$ is Fr\'echet differentiable at $\varphi$ and for $n$ large enough $\rho(A_n'(\varphi))<1$. The conditions needed to apply Proposition \ref{pro1} to the operator $A_n$ are satisfied so that  
  Proposition \ref{pro1} gives the result.
\end{proof}

\section{Numerical evidence}

\subsection{The classical product integration method}
Let us recall the classical product integration method applied to nonlinear operators (see\cite{kaneko}).
The classical product integration approximation    $\psi_n$ solves the nonlinear equation 
\begin{equation*}
\psi_n(s)-\integ{a}{b}{H(s,t)[L(s,t)F(t,\psi_n(t))]_n}{t}=y(s),
\end{equation*} 
where $[L(s,t)F(t,\psi_n(t))]_n$ denotes the piecewise linear interpolant of $t\mapsto L(s,t)F(t,\psi_n(t))$. Using the uniform grid 
$\Delta_n$, we obtain
 
\begin{equation}\label{prodintcla}
\psi_n(s)-\sum_{j=0}^{n} w_{n,j}(s) L(s,t_{n,j})F(t_{n,j},\psi_n(t_{n,j})) =y(s),
\end{equation} 
where 
\begin{eqnarray*}
w_{n,0}(s)&:=& \frac{1}{h_n} \int_{t_{n,0}}^{t_{n,1}}H(s,t)(t_{n,1}-t) dt,\\
w_{n,j}(s)&:=& \frac{1}{h_n} \int_{t_{n,j-1}}^{t_{n,j}}H(s,t)(t-t_{n,j-1}) dt + \frac{1}{h_n} \int_{t_{n,j}}^{t_{n,j+1}}H(s,t)(t_{n,j+1}-t)  dt, \ j=1,\ldots,n-1,\\
w_{n,n}(s)&:=& \frac{1}{h_n} \int_{t_{n,n-1}}^{t_{n,n}}  H(s,t)(t-t_{n,n-1}) dt. 
\end{eqnarray*}
 
Set
\begin{eqnarray*}
{\sf Y}_n(i)&:=& y(t_{n,i}),\\
{\sf A}_n(i,j)&:=&=w_{n,j}(t_{n,i})L(t_{n,i},t_{n,j}),\\
{\sf X}_n&:=&\left[\begin{array}{c}
\psi_n(t_{n,0})\\
\vdots\\
\psi_n(t_{n,n})
\end{array}\right],\\
{\sf F}({\sf X}_n)&:=&\left[\begin{array}{c}
F(t_{n,0},\psi_n(t_{n,0}))\\
\vdots\\
F(t_{n,n},\psi_n(t_{n,n}))
\end{array}\right].
\end{eqnarray*}

Evaluating the equation (\ref{prodintcla}) at the nodes $t_{n,i}, \ i=0,\ldots,n$, the following non linear system is obtained: 

\begin{equation}\label{nonlinapp}
{\sf X}_n-{\sf A}_n{\sf F}({\sf X}_n)={\sf Y}_n,
\end{equation}
which we can solve by the classical finite dimensional Newton method (\cite{argybook} and \cite{argy}). Notice that the Newton's iterates  tends to $\psi_n$ when $k\rightarrow \infty$   and not to $\varphi$, the exact solution. It means that the accuracy of the approximate solution is limited by $n$. 

\subsection{Numerical Illustration}

Numerical experiments are now carried out to illustrate the accuracy  of our   method. Let us consider in $\X:=\CC^0([0,1],\RR)$, the operator
$$
K(\varphi)(s):=\integ{0}{1}{\kappa(s,t,\varphi(t))}{t},\quad \varphi\in\O\subseteq\X,\ s\in[0,1],
$$
with the real valued kernel function $\kappa$~: 
$$
(s,t,u)\in[0,1]{\times}[0,1]{\times}\RR\mapsto\kappa(s,t,u):=\log(|s-t|)\sin(\pi u)  .
$$
The exact solution of $\varphi(s)-K(\varphi)(s)=y(s)$ is 
$$
\varphi(s):=1
$$ 
for 
$$
y(s):=1.
$$

\noindent{\it Implementation remark}~:
To solve the  linear system at each Newton iteration, the integral $\displaystyle\int_a^blog(|s-t|)\sin(\varphi_n^{(k)}(t))dt$ needs to be evaluated. To evaluate it, we use the singularity subtraction technique (\cite{anselone2} and also \cite{ahularlim}). \\\\

In the following, our method is called  the linearization-discretization method and the classical one is called  the discretization-linearization method. We compare them.

 \begin{figure}[H]\label{fig1}
 \centering
\includegraphics[width=13truecm,height=8truecm]{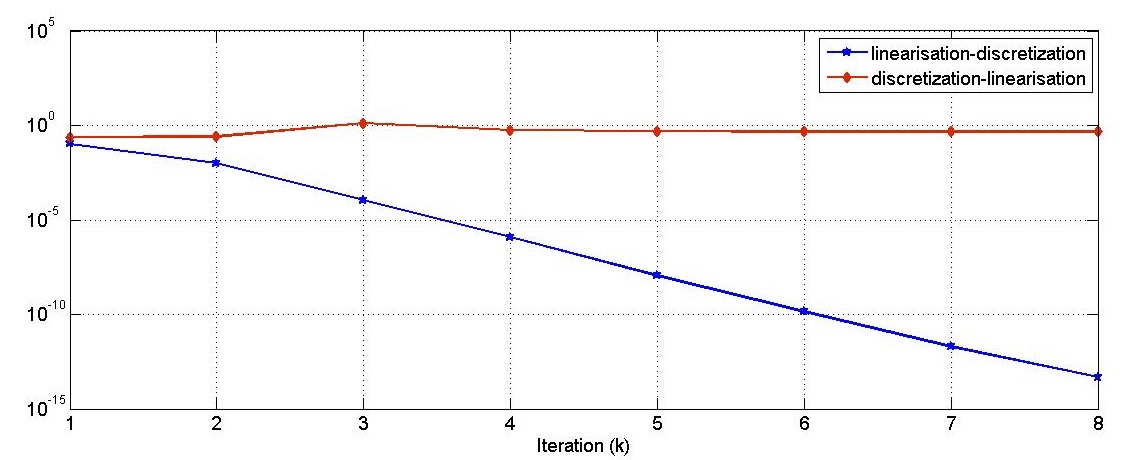} 
  \caption{Comparison of the   errors for $n=50$}
 \end{figure}
 
 The classical discretization-linearization method can not be accurate if it is performed on a coarse grid ($n=50$). We obtain the approximate solution $\psi_n$ of $\varphi$ and the error is constant with the number of iterations. On the other hand, our linearization-discretization method  approaches the exact solution $\varphi$ even if the discretization is done with a coarse grid. It confirms the theoretical result.   
 
 \begin{figure}[H]
 \label{fig2}
 \centering
 \includegraphics[width=13truecm,height=8truecm]{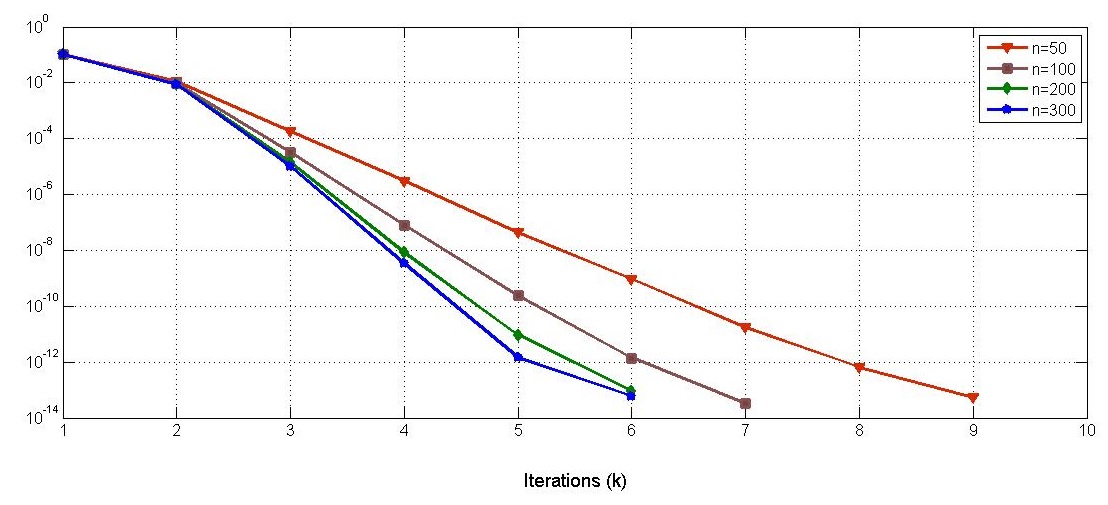}
  \caption{errors of the linearization-discretization method for increasing values of $n$}
 \end{figure}
 
  When $n$ increases, the number of Newton's iterations needed to reach a fixed accuracy decreases. It can be useful to apply higher order product integration methods  (see the idea of Diogo, Franco and Lima in \cite{diogo}), instead of the product trapezoidal rule,  to  reduce the  number of Newton's iterations and therefore the computational cost of the method. An high order product integration approximation, adapted to a Fredholm equation,   can also be a good starting point for the Newton's iterations of our method.

\begin{center}
{\bf Acknowledgements}
\end{center}
The first  author is partially                                                           supported by the Indo-French Centre for Applied Mathematics (IFCAM).

\end{document}